\newcommand{\R}{\mathbb R}
\def\be#1\ee{\begin{equation}#1\end{equation}}
\newcommand{\fer}[1]{(\ref{#1})}
\newtheorem{theorem}{Theorem}[section]
\newcommand{\bq}{\begin{equation}}
\newcommand{\eq}{\end{equation}}
\newenvironment{equations}{\equation\aligned}{\endaligned\endequation}
\def\bqa{\begin{eqnarray}}
\def\eqa{\end{eqnarray}}
\def\ab{{\alpha,\beta}}
\def\ab1{{\alpha_1,\beta_1}}
\newcommand{\bd}{\begin{displaymath}}
\newcommand{\ed}{\end{displaymath}}
\newcommand{\ba}{\begin{eqnarray}}
\newcommand{\ea}{\end{eqnarray}}
\def\R{\mathbb{R}}
\newcommand{\rev}{\textcolor{black}}
\theoremstyle{plain}
\begin{document}
%
%

\title{Trends to equilibrium for a nonlocal Fokker-Planck equation}

\author[1,3]{F. Auricchio \thanks{\texttt{ferdinando.auricchio@unipv.it}}} 
\author[2,3]{G. Toscani \thanks{\texttt{giuseppe.toscani@unipv.it}}}
\author[2]{M. Zanella \thanks{\texttt{mattia.zanella@unipv.it}}}

\affil[1]{Dept. of Civil Engineering and Architecture, University of Pavia} 
\affil[2]{Dept. of Mathematics ``F. Casorati'', University of Pavia} 
\affil[3]{IMATI "E. Magenes", CNR, Pavia}

\date{}

\maketitle

\begin{abstract}
We obtain equilibration rates for a one-dimensional nonlocal Fokker–Planck equation with time-dependent diffusion coefficient and drift, modeling the relaxation of a large swarm of robots, feeling each other in terms of their distance, towards the steady profile characterized by uniform spreading over  a finite interval of the line. The result follows by combining  entropy methods  for quantifying the decay of the solution towards its quasi-stationary distribution, with the  properties of the quasi-stationary  profile.
\end{abstract}
\medskip

\noindent{\bf Keywords:}  Fokker-Planck equations, relative  entropy,  large-time behavior, multiagent systems\\


\section{Introduction}

The main aim of this work is to study the large-time behavior of the probability density function $f(x,t)$, solution of the one-dimensional nonlocal Fokker-Planck equation
\begin{equation}
\label{eq:FP0_k}
\partial_t f(x,t) = \partial_x \left[ (x-\tilde x_0(t)) f(x,t) + \partial_x (\kappa(x,t) f(x,t)) \right],
\end{equation}
where the initial probability density function $f_0(x) \in L^1(\R)\cap L^\infty(\R)$ is such that 
\[
\int_\R(1 + x^2 + \log f_0(x))\,f_0(x) \, dx < +\infty.
\]
In equation \fer{eq:FP0_k}, for a given $x_0 \in \R$, and $\lambda,\mu>0$ such that $\lambda +\mu =1$, 
\be\label{x_0}
\tilde x_0(t) = \lambda x_0 + \mu u(t),
\ee
and, for any time $t \ge 0$, the mean value of $f(x,t)$ is given by 
\be\label{mean}
u(t) = \int_\R xf(x,t)\, dx.
\ee
In \fer{eq:FP0_k}  the continuous variable diffusion coefficient $\kappa(x,t)$, 
\rev{uniformly bounded from below and above}, is given by
\begin{equation}
\label{eq:k0}
\kappa(x,t) = \kappa(x-\tilde x_0(t)) =
\begin{cases}
\sigma^2 + \dfrac{\delta^2}{2}-\dfrac{1}{2}|x-\tilde x_0(t)|^2 & |x-\tilde x_0(t)|<\delta \\
\sigma^2 & |x-\tilde x_0(t)|\ge \delta.
\end{cases}
\end{equation}
 Equation \fer{eq:FP0_k}  is intended to describe the action of a large swarm of moving agents,  capable to spread uniformly over the surface of a \rev{target domain $D = [x_0-\delta,x_0+\delta] \subset \R$, see   \cite{ATZ}}, while the presence of the mean value $u(t)$ is related to a suitable communication between them \rev{and the coefficient $\sigma^2>0$ represents possible disturbances in the motion}. This simple task can be interpreted as the deposition of a single layer in standard additive manufacturing processes \cite{Ac,D_etal,HW,Ox}, a topic included in the analysis of self-organizing features of mathematical models for large systems in social and life sciences \cite{AP,DOB,During,K_etal,MT},  often described  by kinetic-type equations  \cite{BCC,CFRT,HT,PT}. The problem to determine asymptotic behaviour of the system is a classical task \cite{Carrillo,Alonso,Gamba}. 

Concerning the Fokker--Planck equation \fer{eq:FP0_k} , the simpler case $\mu =0$ $\lambda =1$, corresponding to absence of communication between particles, has been treated in \cite{ATZ}, where it was noticed that  the target action of the swarm can be achieved by different choices of the Fokker--Planck type equations, corresponding to alternative strategies.  If $\mu = 0$ in equation \fer{eq:FP0_k}, particles move subject to the simultaneous presence of the drift and diffusion operators, and, still under the action of the drift operator, they  start to randomly explore the target domain $D$ adapting their diffusion to the distance from the center $x_0$ of the domain,  as soon as  they enters in it. In this case, it has been shown in \cite{ATZ} that the solution to the resulting Fokker-Planck type equation \fer{eq:FP0_k} converges in time towards the steady profile with a polynomial rate. 
 
In this paper we  extend the analysis of \cite{ATZ} to the case in which  particles of the swarm are not equally informed about the target to reach, but they sense the entire swarm in terms of their relative distance showing that the solution to the Fokker-Planck equation \rev{still} converges in time to the right target distribution \rev{but at a lower polynomial rate}.

\section{Fokker-Planck models in swarm manufacturing}

We consider a system of $N\gg 1$ particles interacting between them and  with a \rev{target  domain} $D \subset \mathbb R$, which for simplicity is assumed to be an interval centered in $x_0$ with lenght $2\delta>0$. 

Each particle senses the direction of motion towards the center of  $D$ and, once in $D$, it starts to randomly explore the target domain. At variance with \cite{ATZ}   each particle modifies its position also through interactions with the other particles of the swarm.  

Let $f(x,t)\,dx$ be the probability of finding a particle in the elementary volume $dx$ around the point $x \in \mathbb R$ at time $t \ge 0$. The evolution in time of the density $f(x,t)$ can be furnished  by a Fokker-Planck-type equation with constant diffusion and discontinuous and time-dependent drift
\begin{equation}
\label{eq:FP}
\partial_t f(x,t) = \partial_x  \left[ \mathcal B[f](x,t)\rev{\mathds{1}_{D^c}{(x)}}f(x,t) + \sigma^2 \partial_x f(x,t) \right].
\end{equation}
In \fer{eq:FP} the constant $\sigma^2>0$ characterizes the speed of diffusion of the particles, and 
\begin{equation}
\label{eq:drift1}
\mathcal B[f](x,t) = \lambda (x-x_0) + \mu \int_{\mathbb R} P(x, y)(x -y)f( y,t)d y,
\end{equation}
quantifies the attractiveness of the domain $D$ in presence of interactions among  particles. 
Finally, \rev{we indicated with $D^c = \mathbb R\setminus D$ the complement of $D$ and \rev{$\mathds{1}_A(x)$} the indicator function of the domain $A \subseteq \R$}.

In \fer{eq:drift1} $\lambda,\mu$ are nonnegative constants such that $\lambda + \mu = 1$ and $P(x,\cdot) \ge 0$ is an interaction function weighting the influence on a particle in \rev{ $x$ }of all the other particles in terms of their distance from $x$. 
 Therefore,  particles move subject to the simultaneous presence of drift and diffusion unless they are in the target domain $D$ where only the diffusion operator survives. 
 
In what follows we consider $P\equiv 1$, namely the simplified situation characterized by a uniform interaction rate. In this case, the drift term \eqref{eq:drift1} is expressed in terms of the mean value \fer{mean}
\begin{equation}
\label{eq:drift}
\begin{split}
\mathcal B[f](x,t) &= x-\lambda x_0 - \mu u(t) =x - \tilde x_0(t) .
\end{split}
\end{equation}
Hence, the presence of a uniform interaction between particles can be translated in mathematical terms by saying that  the center $\tilde x_0 (t)$ of the interval of attraction of lenght $2\delta$, as defined in \eqref{x_0},   is now moving with time.
To avoid the presence of a discontinuous drift, in \cite{ATZ}  we noticed that the same target dynamics (the same steady profile) could be obtained by resorting to the Fokker--Planck type equation \fer{eq:FP0_k}, characterized by a continuous drift and a variable continuous diffusion $\kappa(x,t)$, as given by \fer{eq:k0}.

To clarify why it is more convenient  to resorting to   formulation  \fer{eq:FP0_k}, it is enough to compute the evolution of the mean value $u(t)$ of the solution to both Fokker--Planck equations. 
As \rev{far} as the formulation \fer{eq:FP} is concerned,  one easily obtains 
\[
\begin{split}
\dfrac{d\, u(t) }{dt} &= - \int_{\R} \mathcal B[f](x,t)\rev{\mathds{1}_{D^c}(x)} f(x,t)dx  \\
&= - \lambda \int_{\rev{D^c}} (x-x_0)f(x,t)dx - \mu \int_{\rev{D^c}}\int_{ \R}(x-y)f(y,t)f(x,t)d y dx. 
\end{split}\]
It is immediate to conclude that, even if  $\mu =0$ the evolution of the mean value  is not explicitly computable
in reason of the presence of the  discontinuous drift.  On the contrary, the mean value of the solution to equation \fer{eq:FP0_k} solves
\[
\dfrac{d\,  u(t) }{dt}= -  \rev{\int_{\R} (x-\tilde{x}_0(t))f(x,t)dx}
= - \lambda( u(t)- x_0), 
\]
and
\be\label{meant}
  u(t) -x_0 = ( u(0) -x_0) e^{-\lambda\, t}.
\ee
\rev{Furthermore, if initially bounded, the second order moment of the solution to equation \fer{eq:FP0_k} remains  bounded. Indeed, if $\bar \kappa = \max_{x \in \mathbb R,t\ge 0}\kappa(x,t)$, 
\[
\begin{split}
\frac{d}{dt} \int_\R x^2\, f(x,t)\,dx&= 2 \int_{\mathbb R}\kappa(x,t)f(x,t)dx -2 \int_{\mathbb R}(x^2-\tilde x_0 x)f(x,t)dx \\
& \le 2 \bar{\kappa} + 2 u(t)\tilde{x}_0(t)-2\int_\R x^2\, f(x,t)\,dx.
 \end{split}
\]
} Since  $u(t)$ converges exponentially to $x_0$ at a rate $\lambda$ for $t\to +\infty$, we can expect that the solution to the Fokker--Planck equation  \eqref{eq:FP0_k} will converge for large time to the steady state  obtained in \cite{ATZ} in the absence of communications between particles, given by
\begin{equation}
\label{eq:finfty_1}
f_\infty(x) = 
\begin{cases}
\dfrac{m_1}{\sqrt{2\pi \sigma^2}} \exp\left\{-\dfrac{|x-x_0|^2}{2\sigma^2}\right\} & |x-x_0|\ge \delta,\\
\dfrac{m_2}{2\delta} & |x-x_0|< \delta, 
\end{cases}
\end{equation}
where $m_1,m_2>0$ only depend on the the conditions that the total mass is unitary and the steady state is continuous over the domain. As shown in \cite{ATZ},  the masses $m_1,m_2$ are uniquely determined in terms of the relevant parameters $\delta,\sigma^2$ characterizing respectively the diffusion coefficient and the length of the interval. Hence, the  steady state \fer{eq:finfty_t} is a continuous function resulting from the weighted combination of a Gaussian density outside $D$ and a uniform density inside $D$. 

To study convergence of the solution to the Fokker--Planck equation \fer{eq:FP0_k} towards the steady state \fer{eq:finfty_1} we remark that, in view of its structure, this equation possesses a \emph{quasi-stationary} solution, namely a solution, for a fixed time $t>0$, of the first-order differential equation 
\[
 \partial_x (\kappa(x -\tilde x_0(t)) f(x,t)) +  (x-\tilde x_0(t)) f(x,t) =0.
 \]
This solution, in the time-independent case $\mu = 0$, coincides with \fer{eq:finfty_1}, whereas, for $\mu>0 $, it is given by 
\begin{equation}
\label{eq:finfty_t}
f_q(x,t) = 
\begin{cases}
\dfrac{m_1}{\sqrt{2\pi \sigma^2}} \exp\left\{-\dfrac{|x-\tilde x_0(t)|^2}{2\sigma^2}\right\} & |x-\tilde x_0(t)|\ge \delta,\\
\dfrac{m_2}{2\delta} & |x-\tilde x_0(t)|< \delta \rev{.}
\end{cases}
\end{equation}


\section{Large-time behaviour} 

Convergence to equilibrium will be mainly based on the study of the time decay of entropy functionals \cite{FPTT,T1,TZ}, and it will be reached in three steps. We remark that all the computations that follow are justified by the properties of the diffusion coefficient $\kappa(x,t)$ of the Fokker-Planck equation \fer{eq:FP0_k}, that allow us to apply Proposition $2$ of Section $6$ of the paper by Le Bris and Lions \cite{LLB}, relative to the existence and uniqueness of solutions to Fokker-Planck type equations with irregular time dependent coefficients of diffusion and drift.  We have

\begin{theorem}\label{main}
Let us consider the initial value problem for the Fokker--Planck equation  \fer{eq:FP0_k} .
Then, for each \rev{probability density} $f_0(x) \in L^1(\R) \cap L^\infty(\R)$, equation  \fer{eq:FP0_k}  has a unique  solution in the space 
\begin{equations}\nonumber
&f(x,t) \in  L^\infty([0,T], L^1 \cap L^\infty),\\ 
&\rev{\kappa(x,t)} \partial_x f(x,t)\in L^2([0,T], L^2).
\end{equations}
\rev{The unique solution $f(x,t)$ is a probability density for any subsequent time $t>0$}. If moreover the initial datum $f_0(x)$ satisfies
\begin{equation}\label{eq:f0_int}
\int_{\R}(1 +|x|^2 + \log f_0(x))\,f_0(x) \, dx < +\infty \rev{,}
\end{equation}
then, for all \rev{$t \in [0,T]$ }
\be\label{HH}
\int_{\R}(1 +|x|^2 + \log f(x,t))\,f(x,t) \, dx < +\infty.
\ee
\end{theorem}

\begin{proof}
\rev{The proof of existence and uniqueness follows from a result by Le Bris and Lions \cite{LLB}, since the drift and diffusion coefficients of equation \fer{eq:FP0_k} satisfy the hypotheses of  Proposition $2$ of Section $6$. Indeed  $x-\tilde{x}_0(t) \in L^1([0,T], W^{1,1}_{\textrm{loc}}(\mathbb R))$ and it has constant derivative. Moreover
\[
\frac{x-\tilde{x}_0(t)}{1+|x|} = \dfrac{1}{(1+|x|)^2} + \dfrac{(x-\tilde x_0(t))(1+|x|)-1}{(1+|x|)^2} \in L^1([0,T], L^1 + L^\infty(\mathbb R)). 
\]
Also $\kappa(x,t) \in L^2([0,T], W^{1,2}_{\textrm{loc}}(\mathbb R))$ and $\dfrac{\kappa(x,t)}{1+|x|} \in L^2([0,T], L^2+L^\infty(\mathbb R))$. 
} 

\rev{The positivity of the solution  may be obtained by rewriting equation to \eqref{eq:FP0_k} in an equivalent way. To this extent, let us define $g(y,t) = f(x,t)$, where, for $x \in \R$, $y= x-\tilde{x}_0(t)$. then $g(y,t)$ solves
\begin{equation}
\label{eq:g}
\partial_t g(y,t) = - K(t)\partial_y g(y,t) + \partial_y \left[ yg(y,t) + \partial_y(\kappa(y)g(y,t)) \right] = \mathcal A g(y,t) + \mathcal B g(y,t).
\end{equation}
In \fer{eq:g} $K(t) = \lambda\mu(u(0)-x_0)e^{-\lambda t}$. The equation is composed of a pure transport operator $\mathcal A$ and a drift-diffusion operator $\mathcal B$ with $\kappa(y)\ge\sigma^2>0$. Both operators are linear and positivity preserving. Applying the splitting method to equation \fer{eq:g},  and Trotter's formula \cite{DM}
\[
e^{t(\mathcal A + \mathcal B)} = \lim_{n\to +\infty} \left( e^{\mathcal A t/n }e^{\mathcal B t/n}\right)^n
\]
allows to conclude with the positivity of $g(y,t)$, and, consequently of $f(x,t)$.
}
\rev{Concerning the validity of \eqref{HH},  since the solution to \eqref{eq:FP0_k} has bounded second order moment, we may argue as in \cite{Arkeryd} to conclude that $ f |\log^-f|\le e^{-|x|^2} + |x|^2 f  \in L^1(\mathbb R)$. Hence, being $f \log^+ f \in L^1(\mathbb R)$,  $f \log f$ belongs to $L^1(\mathbb R)$.  }
\end{proof}

\rev{We now investigate the asymptotic behaviour of the solution}.  The following result holds
 
 \begin{theorem}\label{convergence}
Let  $f(x,t)$ be the unique solution to the initial value problem for the one-dimensional Fokker--Planck equation \fer{eq:FP0_k}, departing from an initial condition $f_0(x)$ satisfying the hypotheses of Theorem \ref{main}. Then  $f(x,t)$ converges in $L^1(\R)$  towards the one-dimensional steady solution $f_\infty(x)$ defined by \fer{eq:finfty_1}  and the convergence rate is at least $o(t^{-1/2})$.
\end{theorem}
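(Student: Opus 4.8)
The plan is to prove convergence by interposing the quasi-stationary profile $f_q(x,t)$ of \fer{eq:finfty_t} between $f$ and $f_\infty$, and to split
\[
\|f(\cdot,t)-f_\infty\|_{L^1(\R)} \le \|f(\cdot,t)-f_q(\cdot,t)\|_{L^1(\R)} + \|f_q(\cdot,t)-f_\infty\|_{L^1(\R)} .
\]
The second term is purely geometric and is the easy one. By \fer{x_0} and \fer{meant} the two profiles differ only by a translation of size $|\tilde x_0(t)-x_0| = \mu|u(t)-x_0| = \mu|u(0)-x_0|e^{-\lambda t}$, and since $f_\infty$ in \fer{eq:finfty_1} is a fixed bounded-variation function (Gaussian tails glued continuously to a flat plateau), continuity of translations in $L^1$ gives $\|f_q(\cdot,t)-f_\infty\|_{L^1}\le \mathrm{TV}(f_\infty)\,|\tilde x_0(t)-x_0| \le C e^{-\lambda t}$, which decays faster than the claimed rate and is therefore harmless.

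The core of the argument is the first term, which I would control by the relative Boltzmann--Shannon entropy of $f$ with respect to the moving quasi-stationary state,
\[
H(t) = \int_\R f(x,t)\log\frac{f(x,t)}{f_q(x,t)}\,dx \ge 0 .
\]
Using that $f_q$ makes the Fokker--Planck flux vanish, i.e. $(x-\tilde x_0(t))f_q + \partial_x(\kappa f_q)=0$, equation \fer{eq:FP0_k} can be recast in the divergence form $\partial_t f = \partial_x\big(\kappa f_q\,\partial_x h\big)$ with $h = f/f_q$. Differentiating $H$ in time then produces, after one integration by parts, a dissipation identity of the shape
\[
\frac{d}{dt}H(t) = -\,I(t) + R(t), \qquad I(t) = \int_\R \kappa(x,t)\,f_q(x,t)\,\frac{(\partial_x h)^2}{h}\,dx \ge 0 ,
\]
where $I(t)$ is a weighted Fisher information (admissible because $\sigma^2 \le \kappa \le \sigma^2+\delta^2/2$ and $f_q>0$), and $R(t)$ is the correction coming from the time dependence of $f_q$. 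Since $\partial_t f_q = -\dot{\tilde x}_0(t)\,\partial_x f_q$ with $|\dot{\tilde x}_0(t)| = \mu\lambda|u(0)-x_0|e^{-\lambda t}$, and since $\partial_x\log f_q$ grows only linearly (it equals $-(x-\tilde x_0)/\sigma^2$ in the Gaussian region and vanishes on the plateau), $R(t)$ is bounded by $|\dot{\tilde x}_0(t)|$ times the first moment of $f$, which is finite by \fer{HH}; hence $|R(t)|\le C e^{-\lambda t}$ is integrable in time.

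To turn this into a rate I would first integrate the dissipation identity: because $\int_0^\infty|R(t)|\,dt<\infty$ and $H\ge 0$, one gets $\int_0^\infty I(t)\,dt<\infty$ and that the corrected functional $H(t)+\int_t^\infty|R(s)|\,ds$ is nonincreasing. It then remains to convert entropy production into entropy decay through a functional inequality for the measure $f_q$; combining such an inequality with the integrability of $I$ and the essential monotonicity of $H$ yields $\int_0^\infty H(t)\,dt<\infty$, and for a nonincreasing nonnegative $H$ this forces $t\,H(t)\to 0$, i.e. $H(t)=o(t^{-1})$. The Csisz\'ar--Kullback--Pinsker inequality $\|f-f_q\|_{L^1}^2\le 2H$ then gives $\|f(\cdot,t)-f_q(\cdot,t)\|_{L^1}=o(t^{-1/2})$, and the triangle inequality with the exponentially small second term closes the proof.

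I expect the main obstacle to be precisely this last passage, namely the functional inequality linking $H$ to the entropy production $I$. The quasi-stationary density $f_q$ is not uniformly log-concave: its potential is strictly convex (quadratic) in the tails but \emph{flat} on the interval $|x-\tilde x_0|<\delta$, and the dissipation is moreover measured through the $x$-dependent weight $\kappa$, so a Bakry--\'Emery logarithmic Sobolev inequality with a usable constant is not available off the shelf, and this is exactly what forces a polynomial rather than an exponential rate. A secondary technical point is the rigorous justification of the time-differentiation of $H$ and of the integrations by parts in the low-regularity setting of \fer{eq:FP0_k}, which I would handle through the a priori bounds of Theorem \ref{main} together with the well-posedness supplied by \cite{LLB}.
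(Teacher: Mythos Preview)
Your overall architecture---splitting via $f_q$ and handling $\|f_q-f_\infty\|_{L^1}$ by the exponentially small translation, then computing the entropy dissipation $\frac{d}{dt}H=-I+R$ with an exponentially decaying remainder---matches the paper exactly up to that point. The divergence comes at the step you yourself flag as the ``main obstacle'': you ask for a functional inequality of log-Sobolev type, $H\lesssim I$, in order to deduce $\int_0^\infty H\,dt<\infty$ from $\int_0^\infty I\,dt<\infty$. That inequality is precisely what is \emph{not} available here (the potential of $f_q$ is flat on the plateau), so as written your argument does not close; your final paragraph is an honest acknowledgement that the key passage is missing.

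The paper avoids this obstacle by a detour through the \emph{Hellinger} distance rather than the relative entropy. The point is twofold. First, one has the algebraic/Poincar\'e-type bound $D^2(f|f_q)\le 2\,I_H(f|f_q)$ (this requires only a spectral-gap-type control for the measure $f_q$ with Gaussian tails, not log-Sobolev), so the integrability $\int_0^\infty I_H\,dt<\infty$ already gives $\int_0^\infty D^2\,dt<\infty$. Second, $D^2$ has its own dissipation identity $\frac{d}{dt}D^2=-I_D+L_D$ with $|L_D|\le Ce^{-\lambda t}$, so $D^2$ plus an exponential correction is nonincreasing. A nonnegative, essentially monotone, time-integrable quantity is $o(t^{-1})$, hence $D(f|f_q)=o(t^{-1/2})$, and since the $L^1$ distance is controlled by the Hellinger distance the conclusion follows. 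In short, the missing ingredient in your plan is not a log-Sobolev inequality for $H$ but the switch to a weaker Lyapunov functional ($D^2$) that is both controlled by the entropy production \emph{and} essentially monotone along the flow. Your treatment of $\|f_q-f_\infty\|_{L^1}$ via bounded variation and translation is cleaner than the paper's domain-splitting computation of $H(f_\infty|f_q)$ and works just as well.
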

\begin{proof}
Let $H(g|h)$ be the \rev{relative} Shannon entropy between two probability density functions $g$ and $h$  \be\label{eq:entr}
 H(g|h) = \int_\R g(x) \log \dfrac{g(x)}{h(x)}\, dx.
 \ee
Since $f$ and $f_q$ are time-dependent we have
 \be\label{eq:decay}
 \dfrac{d}{dt}  H(f(t)|f_q(t)) = \int_\R \left(1 +\log\dfrac{f(x,t)}{f_q(x,t)}  \right)\partial_t f(x,t) \, dx  - \int_\R f(x,t)\partial_t \log f_q(x,t)  \, dx.
 \ee
\rev{Thanks to Theorem \ref{main} we obtain that $f(x,t) \in H^1(\mathbb R)$ and therefore is sufficiently regular.} The first integral on the right-hand side of equation \fer{eq:decay} coincides with $-I_H(f(t)|f_q(t))$,  where following \rev{Theorem 7 in} \cite{FPTT}, the entropy production term of the relative Shannon entropy is given by 
\be\label{ep}
I_H(f(t)|f_q(t)) = \rev{4}\int_{\R} \kappa(x,t)f_q(x,t) \left(\partial_x \sqrt{\dfrac{f(x,t)}{f_q(x,t)}} \right)^2\, dx .
\ee
To evaluate the second term, we observe that, resorting to definition \fer{eq:finfty_t} one obtains
\begin{equation}
\label{eq:finfty_der}
\partial_t f_q(x,t) = 
\begin{cases}
-f_q(x,t) \dfrac{\lambda\mu}{\sigma^2}(u(0) -x_0)(x-\tilde x_0(t))e^{-\lambda\, t}  & |x-\tilde x_0(t)|> \delta,\\
0 & |x-\tilde x_0(t)|< \delta, 
\end{cases}
\end{equation}
Hence, the second term in \eqref{eq:decay} can be evaluated by resorting to \fer{eq:finfty_der} 
\[
\begin{split}
&L_H(f(t)|f_q(t)) = - \int_\R f(x,t)\partial_t \log f_q(x,t)  \, dx = e^{-\lambda\, t} \dfrac{\lambda\mu}{\sigma^2}(u(0) -x_0)\int_{ |x-\tilde x_0(t)|> \delta}  (x-\tilde x_0(t))f(x,t)\, dx  = \\ 
&\quad e^{-\lambda\, t} \dfrac{\lambda\mu}{\sigma^2}(u(0) -x_0)\left[ \rev{\lambda}(u(0) -x_0)  e^{-\lambda\, t}  - \int_{ |x-\tilde x_0(t)|< \delta}  (x-\tilde x_0(t))f(x,t)\, dx \right] .
\end{split}\]
Furthermore, we have
\[
\left|\int_{ |x-\tilde x_0(t)|< \delta}  (x-\tilde x_0(t))f(x,t)\, dx \right| \le \int_{ |x-\tilde x_0(t)|< \delta}  |x-\tilde x_0(t)|f(x,t)\, dx  \le \delta,
\]
and the term $L_H(f(t)|f_q(t))$ decays exponentially to zero at the rate $\lambda t$.

As a second step, let us observe that the entropy production term \fer{ep} bounds from above the square of the Hellinger distance between $f(x,t)$ and $f_q(x,t)$ \cite{ATZ}, where the Hellinger distance between two probability density functions  $g,h$ is defined as follows
\[
D^2(g|h) = \int_{\mathbb R} \left(\sqrt{g(x)}-\sqrt{h(x)}\right)^2\,dx. 
\]
Indeed, as shown for example in \rev{Section 3.2 of} \cite{FPTT} one has the inequality
\be\label{disu}
D^2(f(t)| f_q(t)) \le \rev{\dfrac{1}{2} }I_H(f(t)|f_q(t)).
\ee
As a third step, let us evaluate the variation in time of the Hellinger distance. We have  \cite{FPTT} 
 \be\label{eq:decayD}
 \dfrac{d}{dt}D^2(f(t)|f_q(t)) = \int_\R \left(1 - \sqrt{\dfrac{f_q(x,t)}{f(x,t)} } \right)\partial_t f(x,t) \, dx + \int_\R \left(1 - \sqrt{\dfrac{f(x,t)}{f_q(x,t)} } \right)\partial_t f_q(x,t) \, dx .
 \ee
 The first integral on the right-hand side of equation \fer{eq:decayD} coincides with $-I_D(f(t)|f_q(t))$,   where the entropy production term of the squared Hellinger distance is given by  \cite{FPTT}
\be\label{epH}
I_D(f(t)|f_q(t)) =\rev{8} \int_{\R} \kappa(x,t)f_q(x,t) \left(\partial_x \sqrt[4]{\dfrac{f(x,t)}{f_q(x,t)}} \right)^2\, dx .
\ee
The second term on the right-hand side of \eqref{eq:decayD} corresponds to 
\begin{equations}\label{eq2}
&L_D(f(t)|f_q(t)) = \\
-&e^{-\lambda\, t} \dfrac{\lambda\mu}{\sigma^2}(u(0) -x_0)\int_{ |x-\tilde x_0(t)|> \delta}  (x-\tilde x_0(t))\left(f_q(x,t)- \sqrt{f(x,t)f_q(x,t)}\right)\, dx.
\end{equations}
\rev{ Substituting $f_q(x,t)$ on the set $ |x-\tilde x_0(t)| \ge \delta$  with a Gaussian density, it is immediate to show,  by the Cauchy-Schwarz inequality and  definition \fer{eq:finfty_t}, that }
 \begin{equations}\nonumber
&\left| \int_{ |x-\tilde x_0(t)|> \delta}  (x-\tilde x_0(t))f_q(x,t)\, dx\right| \le  \left[\int_{ |x-\tilde x_0(t)|> \delta}  (x-\tilde x_0(t))^2f_q(x,t)\, dx\right]^{1/2} \le \\
&\left(\int_{\R}  (x-\tilde x_0(t))^2 \dfrac{m_1}{\sqrt{2\pi \sigma^2}} \exp\left\{-\dfrac{|x-\tilde x_0(t)|^2}{2\sigma^2}\right\}\, dx\right)^{1/2}   = \frac{\sqrt{m_1}}{\sigma}.
 \end{equations} 
 The same inequality holds for the second integral in \fer{eq2}. Therefore we get
\be
|L_D(f(t)|f_q(t))| \le 2 \frac{\sqrt{m_1}}{\sigma}e^{-\lambda\, t} \dfrac{\lambda\mu}{\sigma^2}|u(0) -x_0| = -2 \frac{\sqrt{m_1}\mu}{\sigma^3} |u(0) -x_0|\frac d{dt}e^{-\lambda\, t}. 
\ee
The previous bound, combined with  \fer{eq:decayD} implies 
\be\label{giu}
\dfrac{d }{dt}D^2(f(t)|f_q(t)) + 2 \frac{\sqrt{m_1}\mu}{\sigma^3} |u(0) -x_0|\frac d{dt}e^{-\lambda\, t} \le -I_D(f(t)|f_q(t)) \le 0.
\ee
 To conclude, it is enough to remark that, integrating the equation \fer{eq:decay} in time from $0$ to $\infty$ shows that the entropy production $I_H(f(t)|f_q(t))$ is integrable over the positive real line. Hence, thanks to inequality \fer{disu}, we realize that $D^2(f(t)|f_q(t))$ is integrable in time on the positive real line. On the other hand, thanks to \fer{giu}, we know that the function
 \[
 D^2(f(t)|f_q(t)) + 2 \frac{\sqrt{m_1}\mu}{\sigma^3} |u(0) -x_0|e^{-\lambda\, t}
 \]
is non-increasing and integrable in time. Consequently, the square of the Hellinger distance decays to zero as time goes to infinity at least at a rate $o(1/t)$. This shows that the Hellinger distance, and consequently the $L^1(\R)$-norm, between the solution $f(x,t)$ of the Fokker--Planck equation \fer{eq:FP0_k} and its quasi-steady solution decays to zero as time goes to infinity at a rate at least of order $o(t^{-1/2})$. It remains to prove that  $f_q(x,t)$ converges in $L^1(\R)$-norm, as time tends to infinity, towards the equilibrium solution $f_\infty(x)$, as given by \fer{eq:finfty_1}, to conclude with the convergence in $L^1(\R)$-norm of the solution to the  Fokker--Planck equation \fer{eq:FP0_k} towards $f_\infty(x)$.

Rather than directly demonstrating the convergence in $L^1(\R)$-norm of $f_q(x,t)$  towards $f_\infty(x)$, we evaluate the  Shannon entropy of $f_\infty(x)$ relative to $f_q(x,t)$, that is
\[
 H(f_\infty|f_q(t)) = \int_\R f_\infty(x) \log \dfrac{f_\infty(x)}{f_q(x,t)}\, dx\rev{.}
\]
 Owing to definition \fer{eq:finfty_t}, to the definition of $\tilde x_0(t)$ given in \fer{x_0},  and to \fer{meant}, we realize that, for $t>0$ large enough, $\tilde x_0(t)$ is sufficiently close to $x_0$, and the above integral can be evaluated by splitting it in four domains. We consider the case in which $u(0) -x_0 > 0$, the other case can be treated likewise, and let us set $B(t) = (u(0) -x_0)e^{-\lambda t}$. The domains are defined by
 \begin{equations}\label{set4}
 &E_1(t) = \left\{ x < x_0 -\delta; x > x_0+ \delta + B(t)\right\}; E_2(t) =  \left\{  x_0 -\delta +B(t) < x < x_0+ \delta \right\}; \\
 &E_3(t) = \left\{ x_0 -\delta <x < x_0- \delta + B(t)\right\}; E_4(t) =  \left\{ x_0 +\delta <x < x_0+\delta + B(t)\right\}.
  \end{equations}
  On the domain $E_1(t)$ the functions $f_\infty(x)$ and $f_q(x,t)$ coincide with the exponential functions. Consequently
  \[
  \int_{E_1(t)} f_\infty(x) \log \dfrac{f_\infty(x)}{f_q(x,t)}\, dx =(\tilde x_0 - x) \int_{E_1(t)} f_\infty(x)\frac 1{2\sigma^2} ( \tilde x_0(t) + x_0 -2x)\, dx,
  \]
which converges to zero exponentially in time since all moments of the exponential distribution $f_\infty$ are bounded. 

On the set $E_2(t)$ the the functions $f_\infty(x)$ and $f_q(x,t)$ coincide with the same constant and the integral is therefore equal to zero.  Finally, let us remark that the sets $E_3(t)$ and $E_4(t)$ have measure $B(t)$, that decays exponentially to zero. 
Since the functions inside the integrals are bounded in absolute value, by the mean value theorem we conclude with the exponential decay to zero of the integrals evaluated on these two sets. 

In conclusion, the relative entropy $ H(f_\infty|f_q(t))$ decays exponentially to zero. We remark that an explicit bound can be provided, at the expense of additional computations.

On the other hand, convergence in relative entropy implies, thanks to the Csiszar-Kullback-Leibner inequality \cite{FPTT}, convergence in $L^1(\R)$. Indeed, for any pair of probability density functions  $g$ and $h$ with the same mass, this inequality reads
\[ 
\|g-h\|_{L^1(\R)}^2 \le 2 H(g|h).
\]
This concludes the proof of convergence. 
\end{proof}

\section*{Concluding remarks}

Starting from a Fokker-Planck-type model with discontinuous drift  describing large swarms of agents that spread uniformly over the surface of a domain, we have introduced a communication between agents that can sense each other and align towards the center of the target domain.  The introduced modeling approach has been inspired by printing tasks and mimics the deposition of a single layer in additive manufacturing processes, and the goal is to exploit the cooperative nature of the interacting systems to define robust and distributed strategies. 

As shown in \cite{ATZ}, the right target can be achieved by resorting to a Fokker--Planck type equations like \fer{eq:FP0_k}, which shares the key property to possess a continuous drift. For this equation, existence and uniqueness of the solution is known \cite{LLB}. Here, we  studied the large time behavior of the solution,  obtaining an explicit rate of convergence in the prototypical case of unitary communication strength. The results extend the ones developed in \cite{ATZ} to the case of variable diffusion coefficient. Future works will extend the results to variable  communication functions.

\medskip

{\bf Acknowledgement.} This work has been done under the activities of the
National Group of Mathematical Physics (GNFM). GT wish to acknowledge partial support by IMATI, Institute for Applied Mathematics and Information Technologies “Enrico Magenes”, Via Ferrata 5 Pavia, Italy. MZ acknowledges the support of MUR-PRIN2020 Project No.2020JLWP23 (Integrated Mathematical Approaches to Socio–Epidemiological Dynamics). 

\medskip

\end{document}